\newtheorem{theorem}{Theorem}
\newtheorem{lemma}[theorem]{Lemma}
\newtheorem{conjecture}[theorem]{Conjecture}
\newtheorem{question}[theorem]{Question}
\theoremstyle{definition}
\newcommand{\co}{\colon\thinspace}
\newcommand{\abs}[1]{\left\lvert #1 \right\rvert}
\DeclareMathOperator{\Length}{Length}
\DeclareMathOperator{\Vol}{Vol}
\DeclareMathOperator{\sys}{sys}
\begin{document}

\title[Growth and simplicial volume]{Growth in the universal cover under\\ large simplicial volume}
\author{Hannah Alpert}
\address{Auburn University, 221 Parker Hall, Auburn, AL 36849}
\email{hcalpert@auburn.edu}

\begin{abstract}
Consider a closed manifold $M$ with two Riemannian metrics: one hyperbolic metric, and one other metric $g$.  What hypotheses on $g$ guarantee that for a given radius $r$, there are balls of radius $r$ in the universal cover of $(M, g)$ with greather-than-hyperbolic volumes?  We show that this conclusion holds for all $r \geq 1$ if $(\Vol (M, g))^2$ is less than a small constant times the hyperbolic volume of $M$.  This strengthens a theorem of Sabourau and is partial progress toward a conjecture of Guth.
\end{abstract}

\maketitle

\section{Introduction}

Let $M$ be a closed Riemannian manifold.  For each $r > 0$ we let $V_r(M)$ denote the supremal volume of a ball of radius $r$ in $M$.  We let $\Vert M \Vert_{\Delta}$ denote the Gromov simplicial volume of $M$; the definition appears in Section~\ref{sec:simp-vol}.  The relevant properties are that $\Vert M \Vert_{\Delta}$ depends only on the fundamental group of $M$, and if $M$ is hyperbolic then $\Vert M \Vert_{\Delta}$ is equal to the hyperbolic volume of $M$ times a constant depending only on the dimension of $n$.  The purpose of this paper is to prove the following theorem.

\begin{theorem}[Main theorem]\label{thm:main-cor}
For every $n \geq 2$, there exists $\delta_n > 0$ such that if $M$ is a closed, oriented, connected $n$-dimensional manifold admitting a hyperbolic metric, and $g$ is another Riemannian metric on $M$ with $\frac{(\Vol (M, g))^2}{\Vert M \Vert_{\Delta}} < \delta_n$, then for all $r \geq 1$ we have
\[V_r(\widetilde{M}) \geq V_r(\mathbb{H}^n),\]
where $\widetilde{M}$ denotes the universal cover of $(M, g)$, and $\mathbb{H}^n$ denotes the hyperbolic $n$-space.
\end{theorem}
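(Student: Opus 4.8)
The plan is to prove the contrapositive in the form of a simplicial-volume estimate: assuming that $V_r(\widetilde{M}) < V_r(\mathbb{H}^n)$ for some $r \geq 1$, I will bound $\Vert M \Vert_{\Delta} \leq C_n\,(\Vol(M,g))^2$ with $C_n$ depending only on $n$, and then take $\delta_n = 1/C_n$. Since $V_\rho \leq V_r$ for $\rho \leq r$ and the covering projection sends a metric $\rho$-ball in $\widetilde{M}$ onto a $\rho$-ball in $M$, the hypothesis says that every ball of radius $\rho \leq r$ in $(M,g)$ has volume less than $V_r(\mathbb{H}^n)$. The role of the threshold $r \geq 1$ is that this is exactly the scale above which $V_r(\mathbb{H}^n)$ ceases to look Euclidean and begins to record the exponential volume growth of $\mathbb{H}^n$: ``large simplicial volume'' for a hyperbolic manifold means large hyperbolic volume, which is an intrinsically large-scale quantity, so it is only above this scale that the conclusion can hold at all (for $r < 1$ positively curved bumps make it false).

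Two structural facts drive the argument. First, $M$ admits a hyperbolic metric, so it is aspherical and $\Vert M\Vert_{\Delta}$ depends only on $\pi_1(M)$; this makes it accessible to the nerve and cut-and-paste techniques for simplicial volume, so that a cover of $M$ by amenable (e.g.\ $\pi_1$-trivial) open sets of bounded multiplicity yields, via the quantitative form of Gromov's vanishing theorem, an upper bound on $\Vert M\Vert_{\Delta}$ in terms of $n$, the multiplicity, and the number of sets. Second, a hyperbolic manifold is essential, so Gromov's systolic inequality gives $\sys(M,g)^n \leq C_n\Vol(M,g)$; in the regime the hypothesis allows, $\Vol(M,g)$ is small, hence so is the systole, and $(M,g)$ is geometrically thin. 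So the strategy is: cover $M$ by small metric balls --- small enough to be amenable and, having radius at most $r$, to have volume under $V_r(\mathbb{H}^n)$ by the previous paragraph --- estimate their number and multiplicity against $\Vol(M,g)$, and feed this into the vanishing theorem.

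The crux --- and the reason the statement carries $(\Vol(M,g))^2$ rather than the linear $\Vol(M,g)$ of Guth's conjecture --- is that, with no lower curvature bound, an individual small ball in $(M,g)$ can have volume far below $\rho^n$: a maximal $\rho$-net controls the number of balls only by $\Vol(M,g)$ divided by the \emph{smallest} $\rho$-ball volume, which in a collapsed region is tiny, so the number of sets, or the multiplicity, blows up and the naive linear estimate is lost. The remedy is a multi-scale refinement: wherever the balls are too small to be efficient, pass to a finer scale and repeat, absorbing a bounded factor at each step and stopping once the combined estimate closes. The number of scales one must descend through, together with the dependence of the vanishing-theorem constant on the multiplicity, is what produces the extra power of $\Vol(M,g)$. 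I expect organizing this refinement so that it loses only a square --- and does so uniformly over all metrics $g$ and all radii $r \geq 1$ --- to be the main difficulty; the supporting ingredients (the nerve lemma, the quantitative vanishing theorem, the systolic inequality, and the final assembly of constants) are standard.
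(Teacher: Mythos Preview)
Your contrapositive reformulation is formally correct, but the plan for proving it has a real gap. The only information you extract from the failed conclusion $V_r(\widetilde M)<V_r(\mathbb H^n)$ is an \emph{upper} bound on ball volumes, $\Vol B(x,\rho)<V_r(\mathbb H^n)$ for $\rho\le r$; since $r$ may be arbitrarily large this bound is arbitrarily weak, and nothing you wrote shows how to get $\Vert M\Vert_\Delta\le C_n(\Vol M)^2$ with $C_n$ independent of $r$. The ``quantitative vanishing theorem'' you invoke is not an existing tool in the form you need: the vanishing theorem gives $\Vert M\Vert_\Delta=0$ once an amenable cover has multiplicity $\le n$, and there is no off-the-shelf version that converts a high-multiplicity amenable cover into a bound by the number of sets. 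The systolic inequality plays no role here, and the ``multi-scale refinement'' is left entirely unspecified --- you give no reason why the number of scales is bounded or why the accumulated loss is exactly one extra factor of $\Vol M$.

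The paper proceeds quite differently and never uses the failed conclusion as input. It proves an \emph{unconditional} inequality $\Vert M\Vert_\Delta \le 16^n(n!)^2\bigl(\Vol M/r_0(M)^n\bigr)^2$, where $r_0(M)$ is the infimum of $\tfrac12\mu(x)$ ($\mu$ the Margulis function) over those $x$ with $\Vol B(x,r)\ge r^n/n!$ for all $r\le\tfrac12\mu(x)$. The tool is not a ball cover but Papasoglu's method of nearly-area-minimising separating filtrations $M=Z_n\supseteq\cdots\supseteq Z_0$ at the varying radius $\rho=\tfrac12\mu$, combined with Gromov's Amenable Reduction Lemma applied to an explicit triangulation, which gives $\Vert M\Vert_\Delta\le 2^n\#Z_0$. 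The square of $\Vol M$ comes from a two-step packing: one bounds $\#Z_0\le\#S\cdot\max_{s\in S}\#\bigl(Z_0\cap B(s,\tfrac12\rho(s))\bigr)$ for a disjoint-ball subfamily $S\subseteq Z_0$, and each factor is $\lesssim\Vol M/\rho_k^n$ by the filtration estimates. The hypothesis $(\Vol M)^2/\Vert M\Vert_\Delta<\delta_n$ then forces $r_0(M)<\alpha_n$, and Sabourau's ping-pong lemma (via the uniform Tits alternative for lattices) turns a point with small $\tfrac12\mu$ and the lower ball-volume bound into $V_r(\widetilde M)\ge V_r(\mathbb H^n)$ for all $r\ge1$. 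So the exponential growth conclusion comes from group theory of $\pi_1(M)$, not from the covering bookkeeping you outline.
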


Theorem~\ref{thm:main-cor} strengthens the following theorem of Sabourau and also uses a large part of its proof.

\begin{theorem}[\cite{sabourau22}]\label{thm:sabourau-cor}
For every $n \geq 2$, there exists $\delta_n > 0$ such that if $M$ is a closed, oriented, connected $n$-dimensional manifold admitting a hyperbolic metric, and $g$ is another Riemannian metric on $M$ with $\Vol (M, g) < \delta_n$, then for all $r \geq 1$ we have
\[V_r(\widetilde{M}) \geq V_r(\mathbb{H}^n).\]
\end{theorem}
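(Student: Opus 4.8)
The plan is to argue by contradiction, pitting a universal lower bound on the simplicial volume against an upper bound forced by a failure of the conclusion. Suppose $M$, $g$ satisfy the hypotheses but $V_{r_0}(\widetilde M) < V_{r_0}(\mathbb{H}^n)$ for some $r_0 \geq 1$. Because $M$ admits a hyperbolic metric, $\|M\|_{\Delta}$ is bounded below by a constant $\eps_n>0$ depending only on $n$: for $n=2$ this is Gauss--Bonnet, since $\|M\|_{\Delta}=2|\chi(M)|\geq 2$, and for $n\geq 3$ it follows from the Gromov--Thurston identity $\|M\|_{\Delta} = \Vol_{\mathrm{hyp}}(M)/v_n$, with $v_n$ the supremal volume of an ideal geodesic $n$-simplex, together with the fact that closed hyperbolic $n$-manifolds have volume bounded below by a positive constant --- by J{\o}rgensen--Thurston for $n=3$ and by Wang's finiteness theorem for $n\geq 4$ (if the infimum were $0$ there would be infinitely many examples below any volume bound). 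Thus it suffices to show that the remaining hypotheses force $\|M\|_{\Delta} < \eps_n$, and by choosing $\delta_n$ small we have wide latitude.

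For the upper bound I would use Gromov's circle of ideas relating $\|M\|_{\Delta}$ to Riemannian volume and macroscopic growth. A convenient entry point is Gromov's inequality $\|M\|_{\Delta} \leq C_n\, h(M,g)^n\, \Vol(M,g)$, where $h(M,g)=\lim_{R\to\infty}\tfrac1R\log \Vol\bigl(B_{\widetilde M}(x,R)\bigr)$ is the volume entropy; combined with $\|M\|_{\Delta}\geq\eps_n$ and $\Vol(M,g)<\delta_n$ this already shows the entropy is as large as we please, $h(M,g)\geq(\eps_n/C_n\delta_n)^{1/n}$. It remains to contradict this using the single-scale hypothesis $V_{r_0}(\widetilde M)<V_{r_0}(\mathbb{H}^n)$. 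The target is to propagate that bound upward in scale --- to show $V_R(\widetilde M)\leq C_n V_R(\mathbb{H}^n)$ along a cofinal set of radii $R$, which would force $h(M,g)\leq n-1$ and close the loop. Dually, one can phrase the whole argument as straightening a fundamental cycle in $\widetilde M$: replace each singular simplex by a canonical simplex whose image lies in a ball of radius on the order of $r_0$, use that $V_{r_0}$ is a supremum (hence a uniform cap on all such balls) to bound the number of simplices needed by $\Vol(M,g)$ divided by the volume swept by one canonical simplex, and read off $\|M\|_{\Delta}<\eps_n$ for $\delta_n$ small. Either formulation reduces the theorem to a \emph{local-in-scale}, \emph{uniform-in-$r_0$} simplicial-volume estimate.

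That estimate is the main obstacle. Gromov's classical inequalities see only the asymptotic growth of $\widetilde M$, whereas here a single radius $r_0$ --- possibly as small as $1$ --- must already detect the (bounded-below) simplicial volume. Propagating ``sub-hyperbolic at scale $r_0$'' to larger scales, or equivalently extracting from the one inequality $V_{r_0}(\widetilde M)<V_{r_0}(\mathbb{H}^n)$ an efficient cover of $M$ by sets that are amenable (or have controlled $\pi_1$-image) at scale $r_0$, is delicate precisely because $(M,g)$ is allowed long thin necks and tiny injectivity radius, so na\"ive Vitali/packing bounds in $\widetilde M$ are unavailable; I expect one needs a more hands-on mechanism, such as induction on scale with rescaling, or an Uryson-width/filling argument adapted to the universal cover. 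This is also where the hypothesis $\Vol(M,g)<\delta_n$, with $\delta_n$ depending only on $n$, is used decisively --- it is genuinely stronger than the bound $\Vol(M,g)\leq\Vol_{\mathrm{hyp}}(M)$ appearing in Guth's conjecture, and it is what makes the scale-$r_0$ information usable at all.
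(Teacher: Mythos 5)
Your proposal identifies the right obstacle but does not overcome it, and the unproved step is precisely where the mathematical content lives. You correctly observe that a lower bound $\Vert M\Vert_{\Delta}\geq\eps_n>0$ is available (Gauss--Bonnet, the Gromov--Thurston proportionality principle, and Wang/J{\o}rgensen--Thurston volume bounds make that part fine), and you correctly observe that Gromov's entropy inequality $\Vert M\Vert_{\Delta}\leq C_n\,h(M,g)^n\Vol(M,g)$ then forces the volume entropy $h(M,g)$ to be enormous when $\Vol(M,g)$ is small. But volume entropy is an asymptotic quantity, and the hypothesis you are trying to contradict is a statement about a single scale $r_0\geq 1$. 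You acknowledge this explicitly (``That estimate is the main obstacle'') and offer only a list of things one might try --- induction on scale, Uryson-width arguments, a covering by amenable-at-scale-$r_0$ sets. None of these is carried out, and it is not clear any of them leads anywhere: large volume entropy is entirely compatible with balls of radius $1$ having tiny volume (consider a metric with a long thin neck on which the injectivity radius is small), so the implication ``entropy large $\Rightarrow V_{r_0}(\widetilde M)$ large'' is false as stated, and the contradiction you are reaching for does not close.

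Sabourau's actual argument, which the paper follows, does not proceed by contradiction from a failure at one scale, nor does it invoke a lower bound on $\Vert M\Vert_{\Delta}$ or volume entropy at all. It instead works with the Margulis function $\mu(x)$ and the $\kappa$-Tits alternative for fundamental groups of closed hyperbolic manifolds (Theorem~\ref{thm:tits}): if one can locate a point $x_0$ where $\frac{1}{2}\mu(x_0)$ is \emph{small} (below a threshold $\alpha_n$) and the ball $B(x_0,\frac{1}{2}\mu(x_0))$ has \emph{Euclidean-type} volume $\geq \frac{(\frac{1}{2}\mu(x_0))^n}{n!}$, then ping-pong on a free subgroup forces $\Vol B_{\widetilde M}(\widetilde x_0,r)$ to grow at least as fast as $V_r(\mathbb{H}^n)$ for all $r\geq 1$ (Lemma~\ref{lem:ping-pong}). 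The remaining work --- the analogue of the ``hands-on mechanism'' you were hoping for --- is to produce such a point $x_0$ from the hypothesis $\Vol(M,g)<\delta_n$, which Sabourau does via Papasoglu's method of area-minimizing separating sets. So the structure is: find a good point, then propagate upward by group theory, rather than assume a bad scale and propagate downward by contradiction. If you want to salvage your approach, the missing ingredient would be a bound of the form $\Vert M\Vert_{\Delta}\leq F\bigl(V_{r_0}(\widetilde M),\Vol(M,g),r_0\bigr)$ valid at a single scale; this is essentially Question~\ref{ques:ball} in the paper and is open.
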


Sabourau's proof uses Papasoglu's method of area-minimizing separating sets, introduced in~\cite{papasoglu20}.  The part of the proof of Theorem~\ref{thm:main-cor} that is different from the proof of Theorem~\ref{thm:sabourau-cor} is contained mainly in~\cite[Theorem 5]{alpert22}, which combines Papasoglu's method with simplicial volume to prove the following theorem, originally due to Guth.

\begin{theorem}[Theorem 2 of~\cite{guth11}]\label{thm:guth}
For every $n \geq 2$, there exists $\delta_n > 0$ such that if $M$ is a closed, oriented, connected $n$-dimensional manifold admitting a hyperbolic metric, and $g$ is another Riemannian metric on $M$ with $\frac{\Vol (M, g)}{\Vert M \Vert_{\Delta}} < \delta_n$, then we have
\[V_1(\widetilde{M}) \geq V_1(\mathbb{H}^n).\]
\end{theorem}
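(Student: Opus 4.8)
The plan is to prove the contrapositive in quantitative form. Namely, I would show that there is a constant $C_n$, depending only on $n$, such that whenever $V_1(\widetilde{M}) < V_1(\mathbb{H}^n)$ one has $\Vert M \Vert_{\Delta} \le C_n \Vol(M,g)$. Granting this, the theorem follows by taking $\delta_n = 1/C_n$: if $\Vol(M,g)/\Vert M \Vert_{\Delta} < \delta_n$ while $V_1(\widetilde{M}) < V_1(\mathbb{H}^n)$, then $\Vert M \Vert_{\Delta} \le C_n \Vol(M,g) < \Vert M \Vert_{\Delta}$, which is absurd since a closed manifold admitting a hyperbolic metric has $\Vert M \Vert_{\Delta} > 0$. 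The hyperbolicity is used only here, to make the conclusion nonvacuous; the estimate $\Vert M \Vert_{\Delta} \le C_n \Vol(M,g)$ itself would be proved for an arbitrary closed, oriented, connected $n$-manifold. So from now on assume every ball of radius $1$ in $\widetilde{M}$ has volume strictly less than a unit ball of $\mathbb{H}^n$.

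The first and main step is to run Papasoglu's method of area-minimizing separating sets, carried out $\pi_1(M)$-equivariantly in the universal cover $\widetilde{M}$, in the form used by Sabourau and adapted to simplicial volume in \cite[Theorem 5]{alpert22}. One fixes a small radius, picks a maximal equivariant family of disjoint balls of that radius, and recursively inserts least-area separating sets between them and between the pieces already produced. The hypothesis that unit balls of $\widetilde{M}$ are sub-hyperbolic in volume is exactly what is needed to compare the isoperimetric behavior of $\widetilde{M}$ with that of $\mathbb{H}^n$, to bound the area created at each stage, and to organize the resulting hypersurfaces into the $(n-1)$-skeleton of a $\pi_1(M)$-equivariant simplicial complex $\widetilde{X}$ whose point-preimages lie inside balls of radius $1$ in $\widetilde{M}$. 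Passing to the quotient gives a map $f \co M \to X$ onto an $(n-1)$-dimensional complex $X = \widetilde{X}/\pi_1(M)$, together with the key estimate that the number of simplices of $X$ is at most $C_n \Vol(M,g)$ --- this comes from the fact that the total area of all the separating sets is at most a dimensional constant times $\Vol(M,g)$.

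The second step turns this decomposition into a bound on $\Vert M \Vert_{\Delta}$. Because the point-preimages of $f$ lift into simply connected balls of radius $1$ in $\widetilde{M}$, they have trivial image in $\pi_1(M)$, so the preimages under $f$ of the open stars of the vertices of $X$ form an open cover of $M$ of multiplicity at most $n$ with amenable fundamental-group images. One then uses Gromov's bounded-cohomology techniques --- straightening a fundamental cycle of $M$ onto the nerve of this cover and diffusing away, via the amenability of the pieces, the part supported on the ``good'' region --- to conclude that $\Vert M \Vert_{\Delta}$ is bounded by a dimensional constant times the number of simplices of $X$, hence by $C_n \Vol(M,g)$. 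This is precisely the content of \cite[Theorem 5]{alpert22}, which I would invoke.

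The hard part is the first step. The difficulty is to carry out Papasoglu's recursion in the noncompact space $\widetilde{M}$ while simultaneously keeping the nerve $(n-1)$-dimensional, keeping its number of simplices linear in $\Vol(M,g)$, and --- most delicately --- extracting the sharp threshold $V_1(\mathbb{H}^n)$ from the comparison with hyperbolic space rather than some smaller dimensional constant; this is where the area-minimization has to be interleaved with the simplicial-volume bookkeeping, and it is why the hypothesis here may be the weaker ``small volume-to-simplicial-volume ratio'' rather than ``small volume''. The remaining translations --- between Uryson width, area-minimizing decompositions, amenable covers, and bounded cohomology --- are by now routine.
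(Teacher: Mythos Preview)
The paper does not supply its own proof of this theorem; it is quoted as a known result of Guth, with \cite[Theorem~5]{alpert22} mentioned as a reproof via Papasoglu's method combined with simplicial volume. Since you ultimately invoke that same \cite[Theorem~5]{alpert22}, at the level of ``which result to cite'' you are aligned with the paper.

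That said, your description of how the argument actually runs is garbled in several places, and would not assemble into a proof as written. First, the filtration in \cite{alpert22} (mirrored here in Lemmas~\ref{lem:triangulate} and~\ref{lem:volball}) is built directly in $M$, not $\pi_1$-equivariantly in $\widetilde{M}$; amenability of the pieces is arranged by choosing the radius so that each ball has amenable image in $\pi_1(M)$, and one passes to a finite cover with large systole if a constant radius is wanted. Second, the hypothesis on $V_1(\widetilde{M})$ plays no role in ``comparing isoperimetric behavior'' or ``bounding the area created at each stage'': in Papasoglu's method the area bounds come purely from the near-minimizing property and need no hypothesis. A ball-volume \emph{upper} bound enters only when one counts $Z_0$, by combining it with the \emph{lower} bound $\Vol B(x,r)\gtrsim r^n/n!$ from Lemma~\ref{lem:volball}. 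Third, a map $f\colon M\to X$ to an $(n-1)$-dimensional complex with amenable point-preimages would force $\Vert M\Vert_\Delta=0$ outright by the Vanishing Theorem, which is not what is happening; the correct output is a \emph{triangulation of $M$} with an amenable vertex coloring, and Theorem~\ref{thm:amenable-reduction} bounds $\Vert M\Vert_\Delta$ by the number $2^n\cdot\#Z_0$ of rainbow simplices. Finally, the bound on that count does not come from ``total area of separating sets'' but from a ball-packing argument in $M$ dividing $\Vol M$ by the per-point volume lower bound.
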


The most ambitious conjecture along these lines is~\cite[Conjecture 2]{guth11}, which says that if $\Vol(M, g) < \Vol (M, \mathrm{hyp})$, then $V_r(\widetilde{M}) > V_r(\mathbb{H}^n)$ for all $r$.  This statement strengthens the theorem of Besson, Courtois, and Gallot which proves the conclusion for all sufficiently large $r$~\cite{besson95}.  A more feasible-sounding next step, suggested by Guth, would be to combine the conclusion of Theorems~\ref{thm:main-cor} and~\ref{thm:sabourau-cor} with the hypothesis of Theorem~\ref{thm:guth}.

\begin{conjecture}[\cite{guth11}]\label{conj:main}
For every $n \geq 2$, there exists $\delta_n > 0$ such that if $M$ is a closed, oriented, connected $n$-dimensional manifold admitting a hyperbolic metric, and $g$ is another Riemannian metric on $M$ with $\frac{\Vol (M, g)}{\Vert M \Vert_{\Delta}} < \delta_n$, then for all $r \geq 1$ we have
\[V_r(\widetilde{M}) \geq V_r(\mathbb{H}^n).\]
\end{conjecture}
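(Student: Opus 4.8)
Since this is Guth's conjecture rather than a theorem proved below, what follows is the strategy I would pursue, together with the step at which I expect it to break down. The plan is to follow the argument behind Theorem~\ref{thm:main-cor} and try to push it to the endpoint, improving its quadratic dependence on $\Vol(M,g)$ to a linear one. Arguing by contradiction, suppose that for some $r \geq 1$ every ball of radius $r$ in $\widetilde M$ has volume strictly less than $V_r(\mathbb{H}^n)$. By Theorem~\ref{thm:guth} we may assume $r > 1$, so the hypothesis genuinely concerns the smallness of $r$-balls whose hyperbolic model volume $V_r(\mathbb{H}^n)$ is exponentially large in $r$. The goal is to produce from this a real singular cycle representing $[M] \in H_n(M;\mathbb{R})$ with $\ell^1$-norm at most $C_n \Vol(M,g)$: that gives $\Vert M \Vert_{\Delta} \leq C_n \Vol(M,g)$, and combined with $\Vol(M,g) < \delta_n \Vert M \Vert_{\Delta}$ and $\Vert M \Vert_{\Delta} > 0$ it forces $1 \leq C_n \delta_n$, which is false once $\delta_n < 1/C_n$. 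The proof of Theorem~\ref{thm:main-cor} already builds such a cycle, but of norm $C_n (\Vol(M,g))^2$; the entire task is the passage from $(\Vol(M,g))^2$ to $\Vol(M,g)$.

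First I would apply Papasoglu's area-minimizing separating sets~\cite{papasoglu20} inside $\widetilde M$, equivariantly under $\pi_1(M)$, iterating the construction until $\widetilde M$ is cut into pieces each contained in a ball of radius comparable to $r$. Pushing the decomposition down to $M$ yields a cell decomposition with cells $C_i$ of volume less than $V_r(\mathbb{H}^n)$, glued along separating hypersurfaces $S_j$; the volume balancing makes the number of cells comparable to $\Vol(M,g)/V_r(\mathbb{H}^n)$, and each $S_j$, lying in a region of small volume, has small $(n-1)$-area by the isoperimetric-type estimates available in such regions. Then, following~\cite{alpert22}, I would straighten this decomposition: after a compatible triangulation the cells and separating sets assemble into a fundamental cycle of $M$ whose $\ell^1$-norm is governed by the incidence combinatorics of the decomposition --- essentially the number of pairs $(C_i, S_j)$ that meet, weighted by the bounded local topology of the pieces.

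The crux is the next step: bounding this incidence count linearly in $\Vol(M,g)$. This is exactly where the argument for Theorem~\ref{thm:main-cor} spends its extra factor, because a single separating hypersurface produced by the iterated construction can meet a number of cells proportional to $\Vol(M,g)$, making the naive count $\bigl(\Vol(M,g)/V_r(\mathbb{H}^n)\bigr)\cdot \Vol(M,g)$, which is quadratic. To do better one must arrange that Papasoglu's separating sets have bounded local multiplicity --- that the recursion can be organized so each $S_j$ interacts with only $O_n(1)$ cells and the stages do not compound. I would attempt to run the separation recursion on a bounded-degree tree of regions, so the hypersurface at a node touches only its children, and to control the total area over the whole tree by a single geometric series whose ratio comes from the small-area separating sets available in small-volume regions of $\widetilde M$. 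One must also check that the constant in the straightening step, which absorbs the local topology of the cells, does not secretly depend on $r$; I expect this to follow from the uniform diameter bound on the cells, exactly as in the scale-$1$ case underlying Theorem~\ref{thm:guth}. If all of this goes through, one reads off $\Vert M \Vert_{\Delta} \leq C_n \Vol(M,g)$ and the contradiction.

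I expect this incidence-counting step to be the main obstacle, and I am not confident it can be cleared by bookkeeping alone. The quadratic loss in Theorem~\ref{thm:main-cor} appears structural: Papasoglu's separating sets are globally area-efficient but not locally sparse, and the large-$r$ regime --- where $V_r(\mathbb{H}^n)$ grows like $e^{(n-1)r}$ --- is precisely where a purely combinatorial sweep-out argument carries the least information. It is quite possible that a genuinely new ingredient is needed: either a sharper, locally bounded refinement of Papasoglu's construction, or an input that actually exploits the exponential volume growth of $\mathbb{H}^n$ (in the spirit of the large-$r$ theorem of Besson, Courtois, and Gallot~\cite{besson95}) to suppress the combinatorial multiplicity. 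Without such an ingredient I would expect the method to stall at $(\Vol(M,g))^{2}$.
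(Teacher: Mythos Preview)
You correctly identify this as an open conjecture, and the paper does not prove it either; it only speculates on possible approaches in Section~\ref{sec:open}. So there is no proof to match, only strategies to compare.

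Your proposed route differs from both of the paper's suggestions. The paper's first suggestion (Question~\ref{ques:ball}) is to sharpen Gromov's inequality of Theorem~\ref{thm:gromov} by replacing one of the two occurrences of $\Vol M$ by $V_r(M)$; this would turn the smallness of $V_r(\widetilde M)$ directly into a bound $\Vert M\Vert_\Delta \leq C_n \Vol M / r^n$ for any $r \leq \tfrac12\sys M$, and passing to covers with large systole finishes. The paper's second suggestion (Question~\ref{ques:spiderweb}) is to extend Karam's $n=2$ argument by bounding the \emph{length} of the $1$-skeleton $Z_1$ in a near-minimal separating filtration linearly in $\Vol \widehat M + \Vert \widehat M\Vert_\Delta$, and then running a graph-growth argument on $Z_1$. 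Your proposal --- run Papasoglu equivariantly in $\widetilde M$ at the failing scale $r$, push down, and try to make the incidence combinatorics linear rather than quadratic --- is a third route.

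There is a genuine issue with your route beyond the incidence-counting obstacle you already flag. The only mechanism the paper has for converting a filtration into an upper bound on $\Vert M\Vert_\Delta$ is the Amenable Reduction Lemma (Theorem~\ref{thm:amenable-reduction}) via Lemma~\ref{lem:triangulate}, and that lemma requires each cell to lie in a ball $B(x,\rho(x))$ whose image in $\pi_1(M)$ is amenable. This is exactly why the paper is forced to take $\rho = \tfrac12\mu$; for an arbitrary scale $r$ the projected cells can carry nonamenable fundamental-group image, and the straightening step you invoke has no replacement for that hypothesis. So either you must restrict to $r \leq \tfrac12\mu(x)$ --- in which case you are back in the setting of Theorem~\ref{thm:main} and cannot exploit the failing radius $r$ --- or you need a different upper bound for simplicial volume, which the paper does not supply. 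Separately, your account of where the square in Theorem~\ref{thm:main} comes from is slightly off: it is not that a single separating hypersurface meets many cells, but that $\#Z_0$ is bounded by $(\#S)\cdot\max_{s\in S}\#\bigl(Z_0\cap B(s,\tfrac12\rho(s))\bigr)$, and each factor is separately bounded by $n!\,\Vol M / (\tfrac12\rho_k)^n$. Removing one of those two factors is indeed the problem, and your instinct that it is structural agrees with the paper's view.
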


Karam has proved the $n=2$ case of Conjecture~\ref{conj:main} in~\cite{karam15}.  In Section~\ref{sec:open} we speculate on what would be needed to prove Conjecture~\ref{conj:main} if combining Karam's methods with those of the present paper.

Section~\ref{sec:sabourau} summarizes the proof of Theorem~\ref{thm:main-cor} and explains which parts are taken from~\cite{sabourau22}.  Section~\ref{sec:simp-vol} presents the property of simplicial volume needed for the proof.  Section~\ref{sec:papasoglu} presents the use of Papasoglu's method from~\cite{alpert22}.  Section~\ref{sec:main} completes the proof of Theorem~\ref{thm:main-cor}, and Section~\ref{sec:open} discusses open questions.

\section{Relationship with Sabourau's proof}\label{sec:sabourau}

In this section we give an overview of the ingredients needed for the proof of Theorem~\ref{thm:main-cor}.  This includes stating the parts of~\cite{sabourau22} that we use unaltered, and commenting on the changes in the remainder of the proof.

To get the comparison $V_r(\widetilde{M}) \geq V_r(\mathbb{H}^n)$ for all $r \geq 1$, we need to show that $V_r(\widetilde{M})$ has at least the same exponential growth rate as $V_r(\mathbb{H}^n)$.  Sabourau's argument relies on the following theorem.

\begin{theorem}[\cite{dey19}]\label{thm:tits}
For each $n$, there exists $\kappa$ such that the fundamental group $G$ of every closed, connected, hyperbolic $n$-dimensional manifold satisfies the $\kappa$-Tits alternative: for every symmetric subset $S$ of $G$ containing the identity, either $S$ generates a group of subexponential growth, or there exist two elements in $S^{\kappa}$ generating a nonabelian free subgroup.
\end{theorem}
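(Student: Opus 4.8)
The non-uniform version of this statement is classical. Since $M$ is closed and hyperbolic, $G=\pi_1(M)$ acts properly and cocompactly by isometries on $\mathbb{H}^n$, so $G$ is word-hyperbolic and torsion-free; every subgroup of $G$ is then either virtually cyclic---hence of linear growth---or non-elementary as a group of isometries of $\mathbb{H}^n$, and a non-elementary such group contains two loxodromic isometries with disjoint fixed-point pairs, suitable powers of which generate a nonabelian free group by the ping-pong argument. All the content of the theorem is therefore in producing such a free pair inside a bounded power $S^{\kappa}$, with $\kappa$ depending only on $n$. I would obtain this from the uniform Tits alternative for linear groups, using that $G$ embeds as a discrete subgroup of $\mathrm{O}(n,1)\subseteq\mathrm{GL}_{n+1}(\mathbb{R})$.

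In detail: we may assume $S$ is finite, since if $\langle S\rangle$ is non-elementary it is generated by a finite symmetric subset of $S$ containing the identity that is still non-elementary (non-elementariness is witnessed by finitely many elements of $S$), while an elementary subgroup of $G$ is virtually cyclic and of subexponential growth, which is the first case. Now suppose $\langle S\rangle$ does not have subexponential growth; being a finitely generated linear group it then has exponential growth, so it is not virtually cyclic, and hence---being a subgroup of the word-hyperbolic group $G$---not virtually solvable. Breuillard's uniform Tits alternative for $\mathrm{GL}_{n+1}$ supplies a constant $N=N(n+1)$, depending only on $n$, such that $S^{N}$ contains two elements generating a nonabelian free group; taking $\kappa=N(n+1)$ finishes the proof. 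Beyond quoting that theorem, the only work is translating ``virtually solvable'' into ``subexponential growth'', which is exactly where cocompactness and word-hyperbolicity of $G$, together with the absence of intermediate growth for linear groups, are used.

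One can also argue directly in $\mathbb{H}^n$, which has the advantage that all of these groups act on the same space with curvature identically $-1$, so that every geometric estimate is uniform in $n$ alone. Assuming $\langle S\rangle$ non-elementary, the plan would be: first show that $S^{\kappa_1}$ contains a loxodromic isometry---bounded-length products and commutators of elements of $S$ with sufficiently spread-out dynamics are forced to be loxodromic---then conjugate it by a bounded-length element of $S$ not stabilising its axis to obtain a second loxodromic with a distinct axis, and finally run a quantitative ping-pong argument on a bounded power of that pair. I expect the main obstacle here to be short closed geodesics: a closed hyperbolic manifold may contain loxodromic elements of arbitrarily small translation length, for which no bounded power translates far enough to run ping-pong naively. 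Handling this requires the Margulis lemma for $\mathbb{H}^n$---the subgroup generated by elements moving a fixed basepoint by less than the Margulis constant $\varepsilon_n$ is virtually nilpotent, so non-elementariness of $\langle S\rangle$ forces a definite amount of hyperbolic dynamics into a bounded power of $S$---and this is the step where the uniform geometry of $\mathbb{H}^n$ is what makes $\kappa$ depend on $n$ alone. In the linear-group route the same difficulty is absorbed into Breuillard's theorem, whose proof handles it by ping-pong on non-Archimedean buildings, where translation lengths are automatically bounded below.
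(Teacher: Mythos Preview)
The paper does not prove Theorem~\ref{thm:tits}; it is quoted from the literature (the citation~\cite{dey19}) and used as a black box in the proof summary of Lemma~\ref{lem:ping-pong}. So there is no proof in the paper to compare your sketch against.

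That said, your sketch is essentially correct as a derivation of the statement. The route through Breuillard's uniform Tits alternative for $\mathrm{GL}_{n+1}$ is the natural one: the only nontrivial point beyond quoting that theorem is that its ``virtually solvable'' alternative collapses, for subgroups of a torsion-free word-hyperbolic group, to ``virtually cyclic'' and hence to subexponential (indeed polynomial) growth, which you handle correctly. Your reduction to finite $S$ is fine, since non-elementariness is witnessed by finitely many elements. The alternative geometric outline you give---produce a loxodromic in a bounded power of $S$, conjugate to get a second one with distinct axis, then run quantitative ping-pong, with the Margulis lemma supplying the uniformity in $n$---is also a viable route and is closer in spirit to how such uniform statements are sometimes proved directly for rank-one lattices; your identification of short translation lengths as the obstacle, and the Margulis lemma as the cure, is exactly right. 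Either approach is acceptable here, and both make clear why $\kappa$ depends only on $n$: in the linear route it is Breuillard's constant $N(n{+}1)$, and in the geometric route it is governed by the Margulis constant $\varepsilon_n$ of $\mathbb{H}^n$.
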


On any Riemannian manifold $M$, the \textbf{\textit{Margulis function}} $\mu \co M \rightarrow \mathbb{R} \cup \{\infty\}$ expresses the threshold between subexponential growth and exponential growth.  At each point $x \in M$, the function value $\mu(x)$ is defined to be the supremal value $\mu$ such that the subgroup of $\pi_1(M, x)$ generated by the loops based at $x$ with length at most $\mu$ is a group of subexponential growth.  The function $\mu(x)$ is greater than or equal to the \textbf{\textit{systole}} of $M$, denoted by $\sys M$, which is the infimal length of a homotopically nontrivial loop in $M$.  For $\rho < \frac{1}{2}\mu(x)$, the image in $\pi_1(M)$ of the ball $B(x, \rho)$ has subexponential growth and thus is amenable; for $\rho > \frac{1}{2}\mu(x)$, Theorem~\ref{thm:tits} exhibits two loops at $x$ of length at most $2\rho\kappa$ that generate a nonabelian free subgroup of $\pi_1(M)$, and so the volumes of balls in $\widetilde{M}$ have at least a certain exponential growth rate.  Sabourau uses this idea to prove the following lemma.

\begin{lemma}[\cite{sabourau22}]\label{lem:ping-pong}
For every $n \geq 2$, there exists a small number $\alpha_n > 0$ such that the following holds.  Let $M$ be a closed, oriented, connected $n$-dimensional manifold admitting a hyperbolic metric, and let $g$ be another Riemannian metric on $M$.  Suppose that there exists $x_0 \in M$ such that $\frac{1}{2}\mu(x_0) < \alpha_n$ and $\Vol B\left(x_0, \frac{1}{2}\mu(x_0)\right) \geq \displaystyle \frac{\left(\frac{1}{2}\mu(x_0)\right)^n}{n!}$. Then for all $r \geq 1$ we have $V_r(\widetilde{M}) \geq V_r(\mathbb{H}^n)$.
\end{lemma}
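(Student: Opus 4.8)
The plan is to convert the hypothesis into a nonabelian free subgroup of $\pi_1(M)$ generated by short loops at $x_0$, to promote its exponential growth to a lower bound for $V_r(\widetilde{M})$ by a covering-multiplicity computation, and then to compare with an explicit upper bound for $V_r(\mathbb{H}^n)$. Write $\rho_0 = \tfrac12\mu(x_0)$, so that $\rho_0 < \alpha_n$ and $\Vol B(x_0,\rho_0) \geq \rho_0^n/n!$. Because $M$ is compact, only finitely many homotopy classes of loops based at $x_0$ have length at most $2\rho_0 = \mu(x_0)$, so the subgroup $H$ of $\pi_1(M,x_0)$ that they generate is finitely generated; by the definition of $\mu$ and the dichotomy between subexponential and exponential growth for finitely generated groups, $H$ has exponential growth, for otherwise $\mu(x_0)$ would exceed $2\rho_0$. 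Now apply Theorem~\ref{thm:tits} with $G = \pi_1(M)$ and with $S$ the (symmetric, identity-containing) set of classes of loops at $x_0$ of length at most $2\rho_0$: since $\langle S\rangle = H$ is not of subexponential growth, $S^\kappa$ contains two elements $a,b$ generating a nonabelian free subgroup $F$, and since each of $a,b$ is a product of $\kappa$ elements of $S$ it is represented by a loop at $x_0$ of length at most $L := 2\kappa\rho_0$.

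Next, fix a lift $\tilde{x}_0 \in \widetilde{M}$ of $x_0$, and recall that $\pi_1(M)$, hence also $F$, acts freely and isometrically on $\widetilde{M}$. For a word $w$ in $\{a^{\pm1},b^{\pm1}\}$ with $\abs{w} \leq k$ the triangle inequality gives $d(\tilde{x}_0, w\tilde{x}_0) \leq \abs{w}\,L \leq kL$, since each generator displaces $\tilde{x}_0$ by at most $L$. Given $y \in B(x_0,\rho_0)$, lift a short path from $x_0$ to obtain $\tilde{y} \in \pi^{-1}(y)$ with $d(\tilde{x}_0,\tilde{y}) < \rho_0$; then, with $k := \lfloor (r-\rho_0)/L\rfloor$, every reduced word $w$ of length at most $k$ satisfies $d(\tilde{x}_0, w\tilde{y}) \leq kL + \rho_0 \leq r$, and the points $w\tilde{y}$ all lie in $\pi^{-1}(y) \cap B(\tilde{x}_0,r)$ and are pairwise distinct because the action is free. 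As $F$ is free on $a,b$ there are at least $3^k$ such words, so $\#\bigl(\pi^{-1}(y)\cap B(\tilde{x}_0,r)\bigr) \geq 3^k$ for every $y \in B(x_0,\rho_0)$. Integrating the covering multiplicity of the Riemannian covering $\pi$ over $M$ then yields
\[
V_r(\widetilde{M}) \;\geq\; \Vol B(\tilde{x}_0,r) \;=\; \int_M \#\bigl(\pi^{-1}(y)\cap B(\tilde{x}_0,r)\bigr)\,dy \;\geq\; 3^{\lfloor (r-\rho_0)/L\rfloor}\cdot\frac{\rho_0^n}{n!},
\]
which notably needs no control on overlaps of balls in $\widetilde{M}$ or on its small-scale geometry.

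Finally I would compare with $V_r(\mathbb{H}^n) = \omega_{n-1}\int_0^r \sinh^{n-1}t\,dt \leq A_n e^{(n-1)r}$ for an explicit dimensional constant $A_n$. The lower bound just obtained grows like $e^{(\log 3/L)r}$ with $\log 3/L = \log 3/(2\kappa\rho_0)$, and once $\rho_0 < \alpha_n$ with $\alpha_n$ chosen small (depending only on $n$, through $\kappa$) this exponential rate exceeds $n-1$; it is therefore enough to check the inequality $3^{\lfloor(r-\rho_0)/L\rfloor}\rho_0^n/n! \geq A_n e^{(n-1)r}$ at $r=1$, after which the side with the larger rate dominates for all $r \geq 1$. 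At $r=1$ the left side is comparable to $3^{1/(2\kappa\rho_0)}\rho_0^n/n!$, which tends to infinity as $\rho_0 \to 0$, so a small enough choice of $\alpha_n$ makes the inequality hold and finishes the proof.

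I expect the crux, and the place where the input from Theorem~\ref{thm:tits} is genuinely used, to be the first step: passing from the analytic condition on $\mu(x_0)$ to an honest nonabelian free group of deck transformations whose generators have translation length $O(\rho_0)$, where the finite-generation point needed for the growth dichotomy must be treated with care. Once that is in place, the covering-multiplicity identity keeps the rest of the argument clean, and the concluding comparison of exponential rates is routine.
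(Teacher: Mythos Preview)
Your argument is correct and follows essentially the same route as the paper's proof summary (which is itself an extraction from Sabourau): use the $\kappa$-Tits alternative to produce a free subgroup generated by loops of length $O(\mu(x_0))$, then a covering-multiplicity count to get the exponential lower bound on $\Vol B_{\widetilde{M}}(\widetilde{x}_0,r)$, and finally compare with $V_r(\mathbb{H}^n)$. The only minor difference is the endgame: the paper quotes Sabourau's rescaling trick (apply the intermediate bound to $\lambda_{n,\kappa}M$) to pass from $\Vol B_{\mathbb{H}^n}(C_{n,\kappa}r)$ to $\Vol B_{\mathbb{H}^n}(r)$, whereas you compare exponential rates directly against the elementary upper bound $V_r(\mathbb{H}^n)\leq A_n e^{(n-1)r}$, which is a slightly more self-contained way to finish.
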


Because Sabourau does not state it in this way, we include the following notes on how to deduce this statement from the proofs of~\cite[Theorem 3.7, Lemma 3.8, Corollaries 3.9 and 3.10]{sabourau22}.

\begin{proof}[Proof summary]
As in~\cite[Corollary 3.10]{sabourau22}, Theorem~\ref{thm:tits} implies that $M$ satisfies the $\kappa$-Tits alternative for some $\kappa$ depending only on $n$.  Let $\mu_0 = \mu(x_0)$, and let $\widetilde{x}_0$ denote any lift of $x_0$ to $\widetilde{M}$.  In our notation, \cite[Lemma 3.8]{sabourau22} says that because of our hypothesis $\Vol B\left(x_0, \frac{1}{2}\mu_0\right) \geq \frac{\left(\frac{1}{2}\mu_0\right)^n}{n!}$ we have
\[\Vol B_{\widetilde{M}}(\widetilde{x}_0, r) \geq \frac{\left(\frac{1}{2}\mu_0\right)^n}{n!} \cdot \left(3^{\left\lfloor\frac{r-\frac{\mu_0}{2}}{2\kappa\mu_0}\right\rfloor}-1\right)\]
for all $r \geq \frac{1}{2}\mu_0$.  The proof is unchanged, depending only on the definition of $\kappa$-Tits alternative. For sufficiently small $\alpha_n$ we have $\mu_0 < 5\kappa\mu_0 < 1$, so the proof of Case 1 of~\cite[Theorem 3.7]{sabourau22} gives a constant $C_{n, \kappa}$ such that
\[\Vol B_{\widetilde{M}}(\widetilde{x}_0, r) \geq \Vol B_{\mathbb{H}^n}(C_{n, \kappa} r)\]
for all $r \geq 1$.  Then as in~\cite[Corollary 3.9]{sabourau22} we can use $C_{n, \kappa}$ to find a large constant $\lambda_{n, \kappa}$ such that applying the above inequality to $\lambda_{n, \kappa}M$ implies the desired conclusion
\[\Vol B_{\widetilde{M}}(\widetilde{x}_0, r) \geq \Vol B_{\mathbb{H}^n}(r)\]
for all $r \geq 1$, using the value of $\lambda_{n, \kappa}$ to select $\alpha_n$ sufficiently small.
\end{proof}

The remainder of Theorem~\ref{thm:sabourau-cor} is proved by showing in~\cite[Theorem 2.7 and Proposition 3.3]{sabourau22} that the assumption $\Vol (M, g) < \delta_n$ implies the existence of a point $x_0$ with the two specified properties.  We prove Theorem~\ref{thm:main-cor} by showing that the assumption $\frac{(\Vol(M, g))^2}{\Vert M \Vert_{\Delta}} < \delta_n$ is sufficient to guarantee such a point $x_0$.

We informally summarize the proof of Theorem~\ref{thm:main-cor} as follows.  Instead of producing only one point $x_0$, for each $\varepsilon > 0$ we produce a finite set of points $Z_0(\varepsilon) \subseteq M$ with the following properties:
\begin{enumerate}
\item $\Vol B(x, \frac{1}{8}\mu(x)) \gtrsim \mu(x)^n - \varepsilon$ for each $x \in Z_0(\varepsilon)$, by Lemma~\ref{lem:volball}.  This plays the role of the second property of $x_0$ in Lemma~\ref{lem:ping-pong}, and the proof is the same as Sabourau uses.
\item $\Vol M \geq \Vol B\left(x, \frac{1}{2}\mu(x)\right) \gtrsim \mu(x)^n \cdot \#\left(Z_0 \cap B\left(x, \frac{1}{4}\mu(x)\right)\right) - \varepsilon$ for each $x \in Z_0(\varepsilon)$, by Lemma~\ref{lem:volball}.  This property compensates for the fact that the balls around various points of $Z_0(\varepsilon)$ may overlap.
\item The number of points in $Z_0(\varepsilon)$ is at least $\frac{1}{2^n} \cdot \Vert M \Vert_{\Delta}$, by Theorem~\ref{thm:amenable-reduction} and Lemma~\ref{lem:triangulate}.  In contrast, Sabourau's proof only guarantees that the number of points in $Z_0(\varepsilon)$ is nonzero.  This statement that a large value of $\Vert M \Vert_{\Delta}$ implies a large number of points in $Z_0(\varepsilon)$, and not just one point, is the main new idea in our proof.
\end{enumerate}
The proof of Theorem~\ref{thm:main-cor}, given in Section~\ref{sec:main}, consists of summing the volume estimates over a suitable subset of $Z_0(\varepsilon)$ and taking the limit as $\varepsilon$ approaches zero.  To get the idea, the reader may safely ignore all instances of $\varepsilon$ on a first read.

\section{Amenable Reduction Lemma}\label{sec:simp-vol}

The simplicial norm was introduced by Gromov in~\cite{gromov82}.  Let $z = \sum_i a_i \sigma_i$ be a singular $d$-cycle on a space $P$, with real coefficients $a_i \in \mathbb{R}$ and simplices $\sigma_i \co \Delta^d \rightarrow P$.  The $\ell^1$ norm of $z$, denoted by $\abs{z}_1$, is $\sum_i \abs{a_i}$, and the \textit{\textbf{simplicial norm}} of a given homology class is the infimum of $\abs{z}_1$ over all cycles $z$ representing the homology class.  The simplicial norm of the class $[z]$ is denoted by $\Vert [z] \Vert_{\Delta}$.  The \textit{\textbf{simplicial volume}} of a closed, oriented, connected manifold $M$, denoted by $\Vert M \Vert_{\Delta}$, is the simplicial norm of the fundamental homology class of $M$ with $\mathbb{R}$-coefficients.  

The property of simplicial norm that we use in this paper is Theorem~\ref{thm:amenable-reduction}, Gromov's Amenable Reduction Lemma.  To understand the relationship between simplicial norm and amenable groups, it may be helpful to start with Gromov's Vanishing Theorem, found in~\cite[Section 3.1]{gromov82} or~\cite[Corollary 6.3]{ivanov87}.  The Vanishing Theorem can be viewed as a special case of the Amenable Reduction Lemma.

To state Theorem~\ref{thm:amenable-reduction}, we need to define what it means to color the vertices of a singular cycle.  Given an arbitrary singular $d$-cycle $z = \sum_{i} a_i\sigma_i$ in a space $P$, we form a $\Delta$-complex in the following way.  We take one Euclidean $d$-simplex for each $\sigma_i$, and identify the $(d-1)$-dimensional faces of these simplices whenever the corresponding singular $(d-1)$-simplices are equal.  Because $\partial z = 0$, every $(d-1)$-dimensional face is identified with at least one other face.  This complex has a simplicial $d$-cycle in which the coefficient of each simplex equals the coefficient in $z$ of the corresponding singular simplex.  We can think of $z$ as the image of this simplicial cycle under the map from the $\Delta$-complex into $P$, and we can talk about the vertices and edges of $z$, meaning the vertices and edges of the $\Delta$-complex, equipped with their maps into $P$.

If $P$ is path-connected, we define a \textbf{\textit{amenable vertex coloring}} of $z$ to be a way to assign colors to the vertices of $z$ such that for each color, if we take the union of all edges of $z$ for which both vertices are that color, then the subgroup of $\pi_1(P)$ generated by this $1$-complex is amenable.  In an amenable vertex coloring, we also require that $z$ does not contain any edges from a vertex to itself.  (In this case, coloring every vertex a different color gives, trivially, an amenable vertex coloring.)  We define a \textbf{\textit{rainbow simplex}} of such a coloring to be any simplex in $z$ for which all $d+1$ vertices are different colors.  The following theorem is from~\cite[Section 3.2]{gromov09} (or see~\cite{alpert16}).

\begin{theorem}[Amenable Reduction Lemma]\label{thm:amenable-reduction}
Let $z = \sum_i a_i\sigma_i$ be a singular cycle on a path-connected cell complex $P$, with an amenable vertex coloring.  Then the simplicial norm of the homology class of $z$ satisfies
\[\Vert [z] \Vert_{\Delta} \leq \sum_{\mathrm{rainbow}\ \sigma_i} \abs{a_i}.\]
\end{theorem}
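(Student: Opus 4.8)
The plan is to use the duality between the $\ell^1$-seminorm on homology and the sup-norm on bounded cohomology. For any path-connected cell complex $P$ and any class $\alpha \in H_d(P;\mathbb{R})$, the Hahn--Banach theorem gives
\[
\Vert \alpha \Vert_{\Delta} \;=\; \sup\left\{\, \langle \varphi, \alpha \rangle \ :\ \varphi \in H^d_b(P;\mathbb{R}),\ \Vert \varphi \Vert_{\infty} \leq 1 \,\right\}.
\]
Writing $r = \sum_{\mathrm{rainbow}\ \sigma_i} \abs{a_i}$, it therefore suffices to show $\langle \varphi, [z]\rangle \leq r$ for every bounded cocycle $\varphi$ with $\Vert \varphi \Vert_{\infty} \leq 1$. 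Because $z$ is a cycle, replacing $\varphi$ by a cohomologous bounded cocycle does not change $\langle \varphi, [z]\rangle = \sum_i a_i\, \varphi(\sigma_i)$; so it is enough to produce, for each such $\varphi$, a cohomologous bounded cocycle $\psi$ with $\Vert \psi \Vert_{\infty} \leq 1$ that vanishes on every non-rainbow simplex $\sigma_i$ appearing in $z$. For such a $\psi$ we get $\langle \psi, [z]\rangle = \sum_{\mathrm{rainbow}\ \sigma_i} a_i\, \psi(\sigma_i) \leq r$, as wanted.

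Producing $\psi$ is the heart of the argument, and it is where amenability enters. Fix a color $c$ and let $A_c \leq \pi_1(P)$ be the amenable subgroup that color $c$ determines in the definition of amenable coloring, namely the subgroup generated by the loops in the union of the color-$c$ edges of $z$. Choosing a left-invariant mean on $A_c$, one averages $\varphi$ ``in the color-$c$ direction''; this is the same averaging that shows $H^{\geq 1}_b$ of an amenable group vanishes, and it yields a cohomologous cocycle of no larger sup-norm whose values are unaffected by sliding a color-$c$ vertex inside its color class. After doing this for every color, the cocycle identity together with this constancy forces the resulting cocycle $\psi$ to vanish on any simplex that has two vertices of the same color (morally, $\psi$ descends to the complex obtained by collapsing each color class, in which such a simplex is degenerate). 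Hence $\psi$ vanishes on all non-rainbow simplices.

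The main obstacle is carrying out these color-by-color averagings consistently. A simplex of $z$ typically has vertices of many colors at once, so averaging over $A_c$ must not destroy the constancy already achieved for the other colors, and at each step one must check that boundedness and the cohomology class are preserved. Following Gromov, the way to organize this is to pass to a convenient model of the colored cycle --- a multicomplex, or an aspherical model assembled from classifying spaces of the $A_c$ --- on which all the averagings can be made compatible, typically by an induction over skeleta or over the number of colors, each step invoking amenability of a single $A_c$. An alternative that bypasses bounded cohomology is Gromov's ``diffusion of cycles'': one spreads each non-rainbow simplex over a F{\o}lner sequence in the relevant $A_c$ so that its contribution to the cycle tends to zero, working in the $\ell^1$-completion and checking that the diffused chain still represents $[z]$. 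In either approach the delicate part is the bookkeeping that simultaneously controls the norm estimate, the (co)homology class, and the vanishing on non-rainbow simplices.
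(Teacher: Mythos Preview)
The paper does not actually prove this theorem; it is quoted from Gromov with a pointer to another reference for a detailed account. Your outline follows the standard route via bounded cohomology: the duality between the $\ell^1$-seminorm and the bounded-cohomology sup-norm, followed by averaging a given bounded cocycle over the amenable color subgroups so that it vanishes on non-rainbow simplices without increasing its norm or changing its class. This is indeed the strategy in the cited sources, and you correctly flag where the real work lies---coordinating the color-by-color averagings so that each step preserves boundedness, the cohomology class, and the vanishing already achieved---which in practice is handled by passing to a multicomplex or aspherical model. As a proof \emph{sketch} your proposal is accurate; as a proof it is incomplete precisely at the point you yourself identify, so there is nothing to compare beyond noting that the paper, too, defers the details elsewhere.
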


\section{Separating filtrations}\label{sec:papasoglu}

This section contains lemmas from~\cite{alpert22} that are based on Papasoglu's method of area-minimizing separating sets, which was introduced in~\cite{papasoglu20}.  To learn the method, another useful exposition is by Nabutovsky in~\cite[Theorem 1.1]{nabutovsky19}, which has as a corollary Gromov's Systolic Inequality, first proven in~\cite[Theorem 0.1.A]{gromov83}

Let $M$ be a closed $n$-dimensional Riemannian manifold, and let $\rho \co M \rightarrow \mathbb{R}$ be a positive, bounded function.  A  \textbf{\textit{$\rho$-separating filtration}} of $M$ consists of sets
\[M = Z_n \supseteq Z_{n-1} \supseteq \cdots \supseteq Z_1 \supseteq Z_0 \supseteq Z_{-1} = \emptyset,\]
such that for each $i = 0, 1, \ldots, n$, the set $Z_{i-1}$ is a \textbf{\textit{$\rho$-separating subset}} of $Z_i$; that is, for each connected component of $Z_i \setminus Z_{i-1}$, there is some $x \in M$ such that the ball $B(x, \rho(x))$ contains this connected component.  (Henceforth we do not include $Z_{-1}$ in the notation.)  We also require smoothness and transversality conditions, given in~\cite{alpert22}, which guarantee in particular that $M$ admits a triangulation in which each $Z_i$ is an $i$-dimensional subcomplex.  These conditions ensure the following lemma.

\begin{lemma}[Lemma 4 of~\cite{alpert22}]\label{lem:triangulate}
Let $M$ be a closed, connected $n$-dimensional Riemannian manifold, and let $\rho \co M \rightarrow (0, \infty)$ be a bounded function with the property that for every ball $B(x, \rho(x))$ in $M$, its image in $\pi_1(M)$ is amenable.  Then for every $\rho$-separating filtration 
\[M = Z_n \supseteq Z_{n-1} \supseteq \cdots \supseteq Z_1 \supseteq Z_0\]
there is a triangulation of $M$ with an amenable vertex coloring, such that the number of rainbow simplices is $2^n \cdot \#Z_0$.
\end{lemma}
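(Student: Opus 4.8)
The plan is to take the triangulation supplied by the transversality conditions, pass to its barycentric subdivision, and color each vertex by recording which stratum $Z_i\setminus Z_{i-1}$, and which connected component of that stratum, the corresponding simplex lives in. Concretely, I would first invoke the smoothness and transversality conditions of~\cite{alpert22} to fix a triangulation $T$ of $M$ in which every $Z_i$ is an $i$-dimensional subcomplex; after subdividing $T$ I may further assume that each of the finitely many points $v\in Z_0$ is a vertex whose closed star in $Z_i$ is a triangulated $i$-dimensional ball with $v$ in its interior, for every $i=0,1,\ldots,n$. Let $T'$ be the barycentric subdivision of $T$, whose vertices are the barycenters $\hat\sigma$ of simplices $\sigma$ of $T$ and whose $n$-simplices are the chains $\sigma_0\subsetneq\sigma_1\subsetneq\cdots\subsetneq\sigma_n$ of simplices of $T$ (necessarily with $\dim\sigma_j=j$). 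For a simplex $\sigma$ of $T$ let $\ell(\sigma)$ be the least $i$ with $\mathring\sigma\subseteq Z_i$; since the $Z_i$ are subcomplexes one has $\dim\sigma\le\ell(\sigma)$, the open simplex $\mathring\sigma$ lies in $Z_{\ell(\sigma)}\setminus Z_{\ell(\sigma)-1}$ and hence in a single connected component of it, and $\ell(\tau)\le\ell(\sigma)$ whenever $\tau$ is a face of $\sigma$. I color the vertex $\hat\sigma$ of $T'$ by the pair consisting of $\ell(\sigma)$ and the connected component of $Z_{\ell(\sigma)}\setminus Z_{\ell(\sigma)-1}$ containing $\mathring\sigma$; there are finitely many colors by compactness, and $T'$ with this coloring is the triangulation promised by the lemma.

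Next I would verify that this is an amenable vertex coloring of the fundamental cycle of $(M,T')$. Since $T'$ is a simplicial complex there are no edges from a vertex to itself. If $\hat\tau$ and $\hat\sigma$ have the same color, say with $\tau$ a face of $\sigma$, then a short computation in barycentric coordinates shows that the edge $\{(1-t)\hat\tau+t\hat\sigma:0\le t\le 1\}$ of $T'$ is contained in $\mathring\tau\cup\mathring\sigma$, hence in the one connected component $C$ of $Z_i\setminus Z_{i-1}$ named by that color, where $i=\ell(\tau)=\ell(\sigma)$. Thus the $1$-complex formed by all edges of a given color lies in a single component $C$ of some stratum, and $C$ is contained in some ball $B(x,\rho(x))$ by the $\rho$-separating property; joining the components of that $1$-complex to a point of $C$ by paths inside $C$ then exhibits the subgroup of $\pi_1(M)$ it generates as conjugate into the image of $\pi_1(B(x,\rho(x)))$ in $\pi_1(M)$, which is amenable by hypothesis. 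Hence the coloring is amenable.

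Finally I would count the rainbow $n$-simplices. Along a chain $\sigma_0\subsetneq\cdots\subsetneq\sigma_n$ the numbers $\ell(\sigma_0)\le\cdots\le\ell(\sigma_n)$ are nondecreasing, and $\ell(\sigma_n)=n$ because $\sigma_n$ is a top simplex; moreover, by the computation above, if $\ell$ takes the same value on two members of the chain then their barycenters receive the same color. So a rainbow chain must have $\ell(\sigma_0)<\cdots<\ell(\sigma_n)$, forcing $\ell(\sigma_j)=j$, i.e.\ $\sigma_j$ is a $j$-simplex contained in $Z_j$ for every $j$, and in particular $\sigma_0\in Z_0$; conversely every such chain is rainbow, since its colors then already have pairwise distinct first coordinates. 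Thus the rainbow $n$-simplices of $T'$ are in bijection with the chains $v=\sigma_0\subsetneq\sigma_1\subsetneq\cdots\subsetneq\sigma_n$ with $v\in Z_0$ and $\sigma_j\subseteq Z_j$. Fixing $v$ and building such a chain one step at a time, the step from $\sigma_{j-1}$ to $\sigma_j$ means choosing a $j$-simplex of the subcomplex $Z_j$ having the $(j-1)$-simplex $\sigma_{j-1}$ as a face; since the closed star of $v$ in $Z_j$ is a triangulated $j$-ball with $v$ interior, and $\sigma_{j-1}$ contains $v$, there are exactly two such $j$-simplices. That is a factor of $2$ at each of the $n$ steps, so there are exactly $2^n$ chains through each $v\in Z_0$, and exactly $2^n\cdot\#Z_0$ rainbow simplices in all.

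The step I expect to be the real obstacle is the first one: extracting from the smoothness and transversality conditions of~\cite{alpert22} that $Z_0$ can be placed in the locus where every $Z_i$ is an honest manifold and the nesting $Z_{i-1}\subseteq Z_i$ is locally standard, so that the closed stars of the points of $Z_0$ are balls and each step of the chain contributes a clean factor of two. Once that local normal form is in hand, the amenability check and the count above are routine.
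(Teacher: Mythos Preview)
Your proposal is correct, and it is essentially the construction the paper is pointing to: the paper omits the proof and defers to \cite{alpert22}, where the triangulation is built from the filtration in exactly this way---barycentrically subdividing a triangulation adapted to the $Z_i$ and coloring each new vertex by the stratum-component containing the corresponding open simplex. Your identification of the rainbow simplices with flags $\sigma_0\subsetneq\cdots\subsetneq\sigma_n$ satisfying $\sigma_j\subseteq Z_j$, and the $2^n$ count via the two cofaces at each step, are the intended argument; the local normal form near $Z_0$ that you flag as the only real obstacle is precisely what the smoothness and transversality hypotheses in \cite{alpert22} are there to supply.
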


We omit the proof, which can be obtained from the proof in~\cite{alpert22} by replacing the constant radius $R$ by the varying radius $\rho$ and by requiring the contributions to $\pi_1(M)$ to be amenable instead of trivial.  The process for constructing the triangulation from the filtration is the same in both proofs.

The following lemma shows that it is possible to choose a $\rho$-separating filtration such that the balls around the points of $Z_0$ satisfy lower bounds on their volumes.

\begin{lemma}[Lemma 7 of~\cite{alpert22}]\label{lem:volball}
Let $M$ be a closed $n$-dimensional Riemannian manifold.  For every positive, bounded function $\rho \co M \rightarrow \mathbb{R}$ and every $\varepsilon > 0$, there exists a $\rho$-separating filtration
\[M = Z_n \supseteq Z_{n-1} \supseteq \cdots \supseteq Z_1 \supseteq Z_0,\]
such that for all $x \in M$ and all $r_1, r_2$ with $0 < r_1 < r_2 < \rho(x)$ we have
\[\#(Z_0 \cap B(x, r_1)) \cdot \frac{(r_2 - r_1)^n}{n!} \leq \Vol B(x, r_2) + \varepsilon.\]
\end{lemma}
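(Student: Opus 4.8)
The plan is to build the filtration from the top down, in the spirit of Papasoglu's area-minimizing separating sets. Having chosen $Z_n = M \supseteq \cdots \supseteq Z_i$, let $Z_{i-1}$ be a $\rho$-separating subset of $Z_i$ — meeting the smoothness and transversality conditions of~\cite{alpert22} — whose $(i-1)$-dimensional Hausdorff measure $\mathcal{H}^{i-1}(Z_{i-1})$ is within $\eta$ of the infimum of $\mathcal{H}^{i-1}$ over all such subsets. Here $\eta > 0$ is a small parameter, to be fixed at the end in terms of $n$, $\varepsilon$, and $\sup\rho$; we settle for being near the infimum rather than attaining it because the regularity conditions may rule out a genuine minimizer. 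Since each $Z_i$ is an $i$-dimensional subcomplex of a triangulation of the compact manifold $M$, we have $\mathcal{H}^i(Z_i) < \infty$, and $Z_0$ is a finite set.

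The crux is a single competitor comparison. Fix a point $x$ and a radius $0 < s < \rho(x)$ such that the distance sphere $\partial B(x,s)$ is transverse to each $Z_i$, which holds for a.e.\ $s$. Excising the part of $Z_{i-1}$ inside $B(x,s)$ and patching in the spherical slice of $Z_i$, set
\[Z_{i-1}' = \bigl(Z_{i-1}\setminus B(x,s)\bigr)\cup\bigl(Z_i\cap\partial B(x,s)\bigr).\]
Then $Z_{i-1}'$ is again a $\rho$-separating subset of $Z_i$ (after the standard smoothing of the corners where $Z_{i-1}$ meets the sphere): a component of $Z_i\setminus Z_{i-1}'$ lying in $B(x,s)$ is contained in $B(x,\rho(x))$, and every other component is contained in one of the balls associated to the components of $Z_i\setminus Z_{i-1}$, because the slice $Z_i\cap\partial B(x,s)$ prevents any path in $Z_i$ from crossing the sphere. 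Hence $Z_{i-1}'$ is admissible in the minimization that defined $Z_{i-1}$, so $\mathcal{H}^{i-1}(Z_{i-1})\le\mathcal{H}^{i-1}(Z_{i-1}')+\eta$; subtracting the common part $Z_{i-1}\setminus B(x,s)$ and using subadditivity of $\mathcal{H}^{i-1}$ gives
\[\mathcal{H}^{i-1}\bigl(Z_{i-1}\cap B(x,s)\bigr)\le\mathcal{H}^{i-1}\bigl(Z_i\cap\partial B(x,s)\bigr)+\eta.\]

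With $x$ fixed, write $F_i(t)=\mathcal{H}^i\bigl(Z_i\cap B(x,t)\bigr)$, so that $F_n(t)=\Vol B(x,t)$ and $F_0(t)=\#\bigl(Z_0\cap B(x,t)\bigr)$. Applying the coarea inequality to the restriction of the $1$-Lipschitz function $d(x,\cdot)$ to $Z_i$ and using $\abs{\nabla^{Z_i}d(x,\cdot)}\le 1$ gives $\int_0^t\mathcal{H}^{i-1}\bigl(Z_i\cap\partial B(x,s)\bigr)\,ds\le F_i(t)$; combined with the previous display this yields, for all $r_1<t<\rho(x)$,
\[\int_{r_1}^t\bigl(F_{i-1}(s)-\eta\bigr)\,ds\le F_i(t).\]
Given $0<r_1<r_2<\rho(x)$, set $N=\#\bigl(Z_0\cap B(x,r_1)\bigr)$, so that $F_0(s)\ge N$ for $s\ge r_1$. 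Substituting this into the inequality above and inducting on $i$ produces, for $s\in[r_1,r_2]$, a bound $F_i(s)\ge\frac{N(s-r_1)^i}{i!}-E_i$, where $E_0=0$ and $E_i\le(\sup\rho)(E_{i-1}+\eta)$, hence $E_n\le C(n,\sup\rho)\,\eta$. Taking $i=n$ and $s=r_2$ gives $\Vol B(x,r_2)\ge\frac{N(r_2-r_1)^n}{n!}-C(n,\sup\rho)\,\eta$, and choosing $\eta$ with $C(n,\sup\rho)\,\eta\le\varepsilon$ is exactly the desired conclusion. One filtration serves every $x$, $r_1$, $r_2$ simultaneously, since once the $Z_i$ are fixed the comparison inequality holds for every $x$ and a.e.\ $s<\rho(x)$.

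The two analytic steps just used — the coarea estimate and the induction — are routine; the real difficulty, and the reason the present paper quotes Lemma~7 of~\cite{alpert22} rather than reproving it, is making the minimization rigorous within the class of subsets obeying the smoothness and transversality requirements. One must know that $\rho$-separating subsets of each $Z_i$ exist in that class with bounded $(i-1)$-dimensional measure, and that the competitor $Z_{i-1}'$ (after a small perturbation) stays in it. The usual remedy is to fix a sufficiently fine triangulation of $M$, minimize over subcomplexes of its iterated subdivisions — only finitely many at each stage — and absorb the approximation error into $\eta$; transversality of the distance spheres $\partial B(x,s)$ to the chosen subcomplexes then holds for a.e.\ $s$ by a Sard-type argument.
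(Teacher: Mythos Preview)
Your argument is correct and follows essentially the same route as the paper's (which simply refers to~\cite{alpert22} with the constant radius $R$ replaced by the varying $\rho$): build the filtration by near-minimizing $(i-1)$-area at each stage, use the excision competitor $Z_{i-1}'=(Z_{i-1}\setminus B(x,s))\cup(Z_i\cap\partial B(x,s))$ together with coarea to get the differential inequality, and then iterate, with the error controlled by $\sup\rho$ exactly as the paper indicates. Your explicit tracking of $E_i\le(\sup\rho)(E_{i-1}+\eta)$ is precisely the ``use $\sup\rho$ instead of $R$'' remark in the text.
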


We omit the proof, which can be obtained from the proof in~\cite{alpert22} by replacing the constant radius $R$ by the varying radius $\rho$, both in this lemma and in the preceding Lemma~6 of~\cite{alpert22}.  When computing the choice of how close each $Z_i$ should be to the infimal possible area, we use $\sup \rho$ instead of $R$.

\section{Main proof}\label{sec:main}

The proof of Theorem~\ref{thm:main-cor} is obtained by combining Lemma~\ref{lem:ping-pong} with the following Theorem~\ref{thm:main}, which is similar both in statement and in proof to \cite[Theorem 5]{alpert22}, but where the role of $\frac{1}{2}\sys M$ is replaced by $r_0(M)$, defined as follows.  Given a closed, connected $n$-dimensional Riemannian manifold $M$, we define the ball growth threshold radius $r_0(M)$, or \textbf{\textit{growth radius}} for short, to be the infimum of $\frac{1}{2}\mu(x)$, taken over only those $x\in M$ that satisfy the property that for all $r \leq \frac{1}{2}\mu(x)$, we have $\Vol B(x, r) \geq \frac{r^n}{n!}$.  Here $\mu$ denotes the Margulis function as defined in Section~\ref{sec:sabourau}.  Sabourau's proof of Theorem~\ref{thm:sabourau-cor} shows that if $\Vert M \Vert_{\Delta} > 0$, then $r_0(M)$ is finite.

\begin{theorem}\label{thm:main}
Let $M$ be a closed, oriented, connected $n$-dimensional Riemannian manifold.  Then we have
\[\Vert M \Vert_{\Delta} \leq 16^n(n!)^2 \cdot \left(\frac{\Vol M}{r_0(M)^n}\right)^2.\]
\end{theorem}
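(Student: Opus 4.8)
The plan is to run the strategy sketched in Section~\ref{sec:sabourau}: for each $\varepsilon>0$ produce the finite set $Z_0(\varepsilon)$ and use it three ways --- as a large collection of points (via simplicial volume), as a spread-out collection (via the volume bounds on the balls around its points), and then double count. First I would reduce to the case $\Vert M\Vert_\Delta>0$, since otherwise the right-hand side is nonnegative and there is nothing to prove. In that case $\pi_1(M)$ is nonamenable, so $\mu(x)<\infty$ for every $x$, and in fact $\mu$ is $2$-Lipschitz: conjugating a loop at $x$ along a shortest path to a point $y$ lengthens it by at most $2d(x,y)$, so $\mu(y)\geq\mu(x)-2d(x,y)$. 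Hence $\mu$ is continuous and bounded, and $r_0(M)\in(0,\infty)$ --- positive because $r_0(M)\geq\tfrac12\sys M$, finite by Sabourau's observation quoted above. I would then set $\rho(x):=\tfrac12\mu(x)$. This function is positive and bounded, and for each $x$ the image of $B(x,\rho(x))$ in $\pi_1(M)$ is generated by classes of loops at $x$ of length strictly less than $\mu(x)$ (a loop in the open ball lies in $B(x,\tfrac12\mu(x)-\delta)$ for some $\delta>0$), hence is a directed union of subgroups of subexponential growth and so is amenable. Thus $\rho$ meets the hypotheses of Lemmas~\ref{lem:triangulate} and~\ref{lem:volball}.

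Next, for each $\varepsilon>0$ I would apply Lemma~\ref{lem:volball} to obtain a $\rho$-separating filtration $M=Z_n\supseteq\cdots\supseteq Z_0$ with $Z_0=Z_0(\varepsilon)$ satisfying $\#(Z_0\cap B(x,r_1))\cdot\frac{(r_2-r_1)^n}{n!}\leq\Vol B(x,r_2)+\varepsilon$ for all $x$ and $0<r_1<r_2<\rho(x)$. By Lemma~\ref{lem:triangulate} there is a triangulation of $M$ with an amenable vertex coloring having exactly $2^n\#Z_0$ rainbow simplices, and applying Theorem~\ref{thm:amenable-reduction} to the fundamental cycle $[M]=\sum\pm\sigma_i$ (the sum over the $n$-simplices, each with coefficient $\pm1$) gives $\Vert M\Vert_\Delta\leq 2^n\#Z_0$. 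Letting $r_1\to0$ in the displayed inequality (so $\#(Z_0\cap B(x,r_1))\geq1$ because $x\in Z_0$) shows that every $x\in Z_0$ obeys $\Vol B(x,r)\geq\frac{r^n}{n!}-\varepsilon$ for all $r\leq\tfrac12\mu(x)$. I would then prove the key bridge to $r_0(M)$: for every $\eta>0$ there is $\varepsilon_0>0$ such that for all $\varepsilon<\varepsilon_0$ every point of $Z_0(\varepsilon)$ satisfies $\tfrac12\mu(x)\geq r_0(M)-\eta$. Indeed, if not, there are $\eta>0$, $\varepsilon_k\to0$, and $x_k\in Z_0(\varepsilon_k)$ with $\tfrac12\mu(x_k)<r_0(M)-\eta$; passing to a subsequential limit $x_\ast$ (compactness of $M$) and using continuity of $\mu$ and of $y\mapsto\Vol B(y,r)$, one checks that $x_\ast$ is a ``good'' point in the sense defining $r_0(M)$ with $\tfrac12\mu(x_\ast)<r_0(M)$ --- contradiction.

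With this in hand I would fix $\eta>0$, put $R:=r_0(M)-\eta$, and take $\varepsilon<\varepsilon_0(\eta)$, so $\rho(x)\geq R$ for all $x\in Z_0$. Then $\Vol B(x,R/4)\geq\frac{(R/4)^n}{n!}-\varepsilon$ for each $x\in Z_0$. For the overlap bound: if $p\in B(x,R/4)$ for some $x\in Z_0$, then $\mu(p)\geq\mu(x)-2d(p,x)\geq\tfrac{3R}{2}$ by the Lipschitz estimate (since $\mu(x)\geq2R$ and $d(p,x)<R/4$), so $\rho(p)\geq\tfrac{3R}{4}$, and Lemma~\ref{lem:volball} at $p$ with $r_1=R/4$ and $r_2\uparrow\tfrac{3R}{4}$ gives $\#(Z_0\cap B(p,R/4))\leq\frac{2^n n!\,(\Vol M+\varepsilon)}{R^n}$; the same bound holds trivially if $B(p,R/4)$ contains no point of $Z_0$. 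Integrating over $p\in M$,
\[\#Z_0\left(\frac{(R/4)^n}{n!}-\varepsilon\right)\leq\sum_{x\in Z_0}\Vol B(x,R/4)=\int_M\#\{x\in Z_0:p\in B(x,R/4)\}\,dp\leq\frac{2^n n!\,(\Vol M+\varepsilon)\,\Vol M}{R^n},\]
so $\Vert M\Vert_\Delta\leq2^n\#Z_0\leq\dfrac{16^n(n!)^2(\Vol M+\varepsilon)\Vol M}{R^{2n}(1-4^n n!\,\varepsilon/R^n)}$. Sending $\varepsilon\to0$ and then $\eta\to0$ yields $\Vert M\Vert_\Delta\leq16^n(n!)^2\bigl(\Vol M/r_0(M)^n\bigr)^2$.

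The hard part will be the claim in the second paragraph --- that the points of $Z_0(\varepsilon)$ cannot accumulate at values of $\tfrac12\mu$ below $r_0(M)$ as $\varepsilon\to0$. This is the one place where the definition of $r_0(M)$ as an infimum over good points genuinely enters, and it rests on the continuity of the Margulis function (which the Lipschitz estimate supplies) and of the volume-of-ball function together with compactness of $M$; one must be slightly careful about the countably many radii at which $y\mapsto\Vol B(y,r)$ might fail to be continuous, but $\eta$ (hence $R$) can be chosen to avoid them. Everything else is bookkeeping: tracking the $\varepsilon$ produced by Lemma~\ref{lem:volball}, the factor $2^n$ from Lemma~\ref{lem:triangulate}, and the optimization of the radius $R/4$ in the double count, which is exactly what makes the constant come out as $16^n(n!)^2$.
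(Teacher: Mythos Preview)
Your proof is correct and follows the same overall architecture as the paper's: set $\rho=\tfrac12\mu$, invoke Lemmas~\ref{lem:volball} and~\ref{lem:triangulate} together with Theorem~\ref{thm:amenable-reduction} to get $\Vert M\Vert_\Delta\le 2^n\#Z_0$, bound $\#Z_0$ in terms of $(\Vol M)^2/\rho_{\min}^{2n}$, and then argue by compactness that $\rho_{\min}\to r_0(M)$ as $\varepsilon\to0$. The constants line up exactly.

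The one substantive difference is in how you bound $\#Z_0$. The paper runs a greedy Vitali-type selection: it picks a subset $S\subseteq Z_0$ whose $\tfrac14\rho$-balls are disjoint and whose $\tfrac12\rho$-balls cover $Z_0$, then writes $\#Z_0\le\sum_{s\in S}\#(Z_0\cap B(s,\tfrac12\rho(s)))$ and bounds $\#S$ by disjointness. You instead integrate the identity $\sum_{x\in Z_0}\Vol B(x,R/4)=\int_M\#(Z_0\cap B(p,R/4))\,dp$ and bound the integrand uniformly, using the Lipschitz property of $\mu$ to ensure $\rho(p)\ge\tfrac34R$ whenever $p$ is $R/4$-close to $Z_0$. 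Your route is arguably cleaner (no auxiliary set $S$), at the cost of needing this Lipschitz control at points \emph{off} $Z_0$; the paper only ever applies Lemma~\ref{lem:volball} at points of $Z_0$ itself.

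One small comment on the limiting step: your argument that a subsequential limit $x_\ast$ of bad points is ``good'' appeals to continuity of $y\mapsto\Vol B(y,r)$, and you note this needs care. The paper sidesteps this entirely via the inclusion $B(x_k,r-d(x_\ast,x_k))\subseteq B(x_\ast,r)$ together with the $1$-Lipschitz property of $\rho$, which gives $\Vol B(x_\ast,r)\ge(r-d(x_\ast,x_k))^n/n!-\varepsilon_k$ directly; you may find that cleaner than worrying about exceptional radii.
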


Assuming Theorem~\ref{thm:main}, we can quickly finish the proof of Theorem~\ref{thm:main-cor}.

\begin{proof}[Proof of Theorem~\ref{thm:main-cor}]
Using the $\alpha_n$ guaranteed by Lemma~\ref{lem:ping-pong}, we choose $\delta_n$ to be
\[\delta_n = \frac{(\alpha_n)^{2n}}{16^n(n!)^2}.\]
Then if $\displaystyle\frac{(\Vol M)^2}{\Vert M \Vert_{\Delta}} < \delta_n$, using Theorem~\ref{thm:main} we have
\[\frac{r_0(M)^{2n}}{16^n(n!)^2} \leq \frac{(\Vol M)^2}{\Vert M \Vert_{\Delta}} < \delta_n = \frac{(\alpha_n)^{2n}}{16^n(n!)^2},\]
and thus $r_0(M) < \alpha_n$.  Applying Lemma~\ref{lem:ping-pong} completes the proof.
\end{proof}

\begin{proof}[Proof of Theorem~\ref{thm:main}]
We select a sequence $\varepsilon_k \rightarrow 0$ and apply Lemma~\ref{lem:volball} with $\varepsilon = \varepsilon_k$ and $\rho = \frac{1}{2}\mu$.  In the resulting $\rho$-separating filtration, we define $\rho_k$ by
\[\rho_k = \min_{x \in Z_0}\rho(x).\]
We select a subset $S$ of $Z_0$ in the following way.  We consider the points of $Z_0$ in order from greatest to least value of $\rho$, and for each point $x \in Z_0$, we include it in $S$ if the ball $B(x, \frac{1}{4}\rho(x))$ is disjoint from all balls $B(s, \frac{1}{4}\rho(s))$ as $s$ ranges over the points already selected to be in $S$.  Then the balls $\{B(s, \frac{1}{4}\rho(s)) : s \in S\}$ are disjoint and the balls $\{B(s, \frac{1}{2}\rho(s)) : s \in S\}$ cover $Z_0$.

Applying Lemma~\ref{lem:volball} with $r_1 = \frac{1}{2}\rho$ and $r_2 \rightarrow \rho$, for each $s \in S$ we have
\[\#\left(Z_0 \cap B\left(s, \frac{1}{2}\rho(s)\right)\right) \leq (\Vol B(s, \rho(s)) + \varepsilon_k)\cdot \frac{n!}{\left(\frac{1}{2}\rho(s)\right)^n} \leq (\Vol M + \varepsilon_k) \cdot \frac{n!}{(\frac{1}{2}\rho_k)^n}.\]
Applying Lemma~\ref{lem:volball} with $r_1 \rightarrow 0$ and $r_2 = \frac{1}{4}\rho$, for each $s \in S$ we have
\[1 \leq \frac{\Vol B\left(s, \frac{1}{4}\rho(s)\right)}{\left(\frac{\left(\frac{1}{4}\rho(s)\right)^n}{n!} - \varepsilon_k\right)} \leq \frac{\Vol B\left(s, \frac{1}{4}\rho(s)\right)}{\left(\frac{\left(\frac{1}{4}\rho_k\right)^n}{n!} - \varepsilon_k\right)}.\]
Combining these inequalities with Lemma~\ref{lem:triangulate} and Theorem~\ref{thm:amenable-reduction}, we have
\begin{align*}
\Vert M\Vert_{\Delta} &\leq 2^n \cdot \#Z_0 \leq\\
&\leq 2^n \cdot \sum_{s \in S} \#\left(Z_0 \cap B\left(s, \frac{1}{2}\rho(s)\right)\right) \leq\\
&\leq 2^n \cdot \sum_{s \in S} (\Vol M + \varepsilon_k) \cdot \frac{n!}{(\frac{1}{2}\rho_k)^n} \leq\\
&\leq 2^n \cdot (\Vol M + \varepsilon_k) \cdot \frac{n!}{(\frac{1}{2}\rho_k)^n} \cdot \sum_{s \in S} \frac{\Vol B\left(s, \frac{1}{4}\rho(s)\right)}{\left(\frac{\left(\frac{1}{4}\rho_k\right)^n}{n!} - \varepsilon_k\right)} \leq\\
& \leq 2^n \cdot (\Vol M + \varepsilon_k) \cdot \frac{n!}{(\frac{1}{2}\rho_k)^n} \cdot \frac{\Vol M}{\left(\frac{\left(\frac{1}{4}\rho_k\right)^n}{n!} - \varepsilon_k\right)}.
\end{align*}
Let $x_k$ be a point in $Z_0$ with $\rho(x_k) = \rho_k$.  Because $M$ is compact, by passing to a subsequence we may assume that the sequence $x_k$ converges to a point $x \in M$.  The function $\rho = \frac{1}{2}\mu$ is continuous; in fact it is $1$-Lipschitz.  Thus $\rho_k \rightarrow \rho(x)$ and taking the limit we have
\[\Vert M\Vert_{\Delta} \leq 16^n(n!)^2 \cdot \left(\frac{\Vol M}{(\rho(x))^n}\right)^2.\]
What remains is to show that $\rho(x) \geq r_0(M)$.  For each $x_k$, for all $r \leq \rho(x_k)$, by Lemma~\ref{lem:volball} we have
\[\frac{r^n}{n!} \leq \Vol B(x_k, r) + \varepsilon_k.\]
We need to show that for all $r \leq \rho(x)$, we have
\[\frac{r^n}{n!} \leq \Vol B(x, r).\]
If so, then $\rho(x) \geq r_0(M)$ by definition.

For all $r$, we have
\[B(x_k, r - d(x, x_k)) \subseteq B(x, r).\]
By the $1$-Lipschitz property of $\rho$, if $r \leq \rho(x)$, then $r - d(x, x_k) \leq \rho(x_k)$.  Thus we have
\[\frac{(r - d(x, x_k))^n}{n!} \leq \Vol B(x_k, r - d(x, x_k)) + \varepsilon_k \leq \Vol B(x, r) + \varepsilon_k,\]
and taking the limit gives
\[\frac{r^n}{n!} \leq \Vol B(x, r),\]
as desired.
\end{proof}

\section{Open questions}\label{sec:open}

In this open questions section, first we propose some possible strengthenings of Theorem~\ref{thm:main}, and then we propose some possible statements that could prove Conjecture~\ref{conj:main}.  

Theorem~\ref{thm:main} is stronger than the same statement with $r_0(M)$ replaced by $\frac{1}{2}\sys M$.  A different (asymptotic) strengthening of that statement about $\sys M$ is the following theorem of Gromov.

\begin{theorem}[Theorem 6.4.D' of \cite{gromov83}]\label{thm:gromov}
There exists $C_n > 0$ such that if $M$ is a closed, oriented, connected $n$-dimensional Riemannian manifold, then we have
\[\Vert M \Vert_{\Delta} \leq C_n \frac{\Vol M}{(\sys M)^n} \cdot \left(\log\left(C_n\frac{\Vol M}{(\sys M)^n}\right)\right)^n.\]
\end{theorem}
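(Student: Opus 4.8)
The plan is to prove Theorem~\ref{thm:gromov} along the same lines as Theorem~\ref{thm:main}: reduce, via Lemma~\ref{lem:triangulate} and Theorem~\ref{thm:amenable-reduction}, to an upper bound on $\#Z_0$ for a well-chosen separating filtration, but now arrange the count so that it is linear in $\Vol M/(\sys M)^n$ up to an $n$-th power of a logarithm, rather than quadratic as in Theorem~\ref{thm:main}. First I would rescale so that $\sys M = 1$, reducing the claim to $\Vert M\Vert_\Delta \lesssim_n \Vol M\,(\log \Vol M)^n$. If $\Vert M\Vert_\Delta = 0$ there is nothing to prove, so I may assume $M$ is essential; then Gromov's systolic inequality --- which, as noted in Section~\ref{sec:papasoglu}, is itself a consequence of Papasoglu's method --- gives $\Vol M \geq c_n > 0$, so $\log \Vol M$ is bounded below once $C_n$ is taken large, and there is no degenerate regime.

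Next I would fix $\rho$ to be a small absolute constant, say $\rho \equiv \tfrac{1}{5}$, small enough that every ball $B(x,\rho)$ has trivial --- hence amenable --- image in $\pi_1(M)$: a loop in $B(x,\rho)$, cut into arcs of length $<\tfrac12$, is homotopic to a product of loops of length $< 2\rho + \tfrac12 < 1 = \sys M$. For each $\varepsilon > 0$, Lemma~\ref{lem:volball} with this $\rho$ yields a $\rho$-separating filtration $M = Z_n \supseteq \cdots \supseteq Z_0$ obeying the volume-regularity estimate $\#(Z_0 \cap B(x,r_1))\cdot\tfrac{(r_2-r_1)^n}{n!} \leq \Vol B(x,r_2) + \varepsilon$ for $0<r_1<r_2<\rho$, while Lemma~\ref{lem:triangulate} and Theorem~\ref{thm:amenable-reduction} give $\Vert M\Vert_\Delta \leq 2^n\,\#Z_0$. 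So it suffices to produce, along a sequence $\varepsilon_k \to 0$, such a filtration with $\#Z_0 \lesssim_n \Vol M\,(\log \Vol M)^n$.

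The refinement needed is in the bookkeeping of the proof of Theorem~\ref{thm:main}, where the quadratic loss enters at one step: after passing to a greedy subset $S \subseteq Z_0$ with disjoint balls $B(s,\tfrac14\rho)$ whose doubles cover $Z_0$, one bounds $\#(Z_0 \cap B(s,\tfrac12\rho))$ crudely by the \emph{global} volume $\Vol M$ times $n!/(\tfrac12\rho)^n$. Instead I would run a dyadic argument over the scales $\rho, 2\rho, 4\rho, \dots$: stratify the points of $Z_0$ --- or, better, unwind Lemma~6 of~\cite{alpert22} and stratify the separating subsets at each of the $n$ codimensions --- according to the scale at which the surrounding ball's volume first saturates, and sum a geometric series over scales. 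At one codimension this costs one factor $\log(\Vol M/(\sys M)^n)$; threading it through all $n$ steps $Z_n \supseteq \cdots \supseteq Z_0$ yields the $n$-th power. A variant closer to Gromov's original route drops the filtration entirely, covers $M$ by $\pi_1$-trivial balls of radius $\rho$ centered at a minimal net, applies Theorem~\ref{thm:amenable-reduction} to a cycle colored by this cover, and bounds the number of rainbow simplices by the net's size times the cover's multiplicity to the $n$-th power.

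The main obstacle, common to both routes, is the phenomenon that already forces the square in Theorem~\ref{thm:main}: controlling the overlap of balls of radius comparable to $\sys M$ in a manifold with no lower injectivity-radius bound, the enemies being long thin ``pockets'' of small volume but large diameter. The dyadic sum above naively runs over $\asymp \log(\operatorname{diam} M/\sys M)$ scales, and the real work is to replace $\operatorname{diam} M$ by $(\Vol M)^{1/n}$ --- that is, to rule out such pockets, or pay for them cheaply. I expect this to require feeding an isoperimetric or filling estimate (the ingredient behind the systolic inequality) into the construction at each scale in a way that makes errors compound additively in $\log(\Vol M/(\sys M)^n)$ rather than multiplicatively, and that is exactly where the logarithmic --- rather than polynomial --- error term would come from.
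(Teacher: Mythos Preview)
The paper does not prove Theorem~\ref{thm:gromov}. It is quoted in the open-questions section as a known result of Gromov from~\cite{gromov83}, and the only remark the paper makes about its proof is the sentence following Question~\ref{ques:ball}: ``The proof of Theorem~\ref{thm:gromov} constructs a metric minimizing the value of $\frac{\Vol M}{(\sys M)^n}$ and then proves the statement above for this special metric.'' So there is no proof in the paper to compare your proposal against.

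Your proposal, for its part, is also not a proof, as you yourself say: the dyadic bookkeeping you sketch would control $\#Z_0$ by a quantity involving $\bigl(\log(\operatorname{diam} M/\sys M)\bigr)^n$, and you correctly isolate --- but do not resolve --- the obstacle of replacing $\operatorname{diam} M$ by $(\Vol M)^{1/n}$. That is precisely the missing step, and the paper gives no hint that the filtration machinery of Section~\ref{sec:papasoglu} can close it; indeed, Question~\ref{ques:r0} is posed as open, which suggests the author does not know how to recover even the $\sys M$ version of Theorem~\ref{thm:gromov} by these methods. Gromov's actual argument, per the paper's one-sentence summary, goes in a different direction: rather than running a multiscale argument on an arbitrary metric, he first passes to an extremal metric for $\frac{\Vol M}{(\sys M)^n}$, whose special geometry tames the thin-pocket phenomenon directly. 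Your ``variant closer to Gromov's original route'' is nearer in spirit, but the multiplicity bound for the nerve is again where the thin pockets would bite in an arbitrary metric.
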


We can ask whether there is an analogous further strengthening of Theorem~\ref{thm:main}.

\begin{question}\label{ques:r0}
Does Theorem~\ref{thm:gromov} remain true if $\sys M$ is replaced by $r_0(M)$?
\end{question}

The following further strengthening of Theorem~\ref{thm:gromov} would imply Conjecture~\ref{conj:main}.

\begin{question}\label{ques:ball}
Is the following statement true?  There exists $C_n > 0$ such that if $M$ is a closed, oriented, connected $n$-dimensional Riemannian manifold, then for all $r \leq \frac{1}{2}\sys M$, we have
\[\Vert M \Vert_{\Delta} \leq C_n\frac{\Vol M}{r^n}\cdot \left(\log\left(C_n\frac{V_r(M)}{r^n}\right)\right)^n.\]
\end{question}

The proof of Theorem~\ref{thm:gromov} constructs a metric minimizing the value of $\frac{\Vol M}{(\sys M)^n}$ and then proves the statement above for this special metric.  Thus, it is reasonable to hope that the same statement may be true of an arbitrary metric.  If so, then we would have
\[C_n \frac{V_r(M)}{r^n} \geq \exp\left(r \left(\frac{\Vert M\Vert_{\Delta}}{C_n \Vol M}\right)^{\frac{1}{n}}\right),\]
meaning that if $\frac{\Vol M}{\Vert M \Vert_{\Delta}}$ is small, then $V_r(M)$ has a high exponential growth rate as long as $r \leq \frac{1}{2}\sys M$.  Taking a covering space in which $\sys M$ is arbitrarily large, this statement would imply Conjecture~\ref{conj:main}.

A second potential approach to Conjecture~\ref{conj:main} is to try to extend Karam's proof of the $n=2$ case to higher dimensions, as follows.

\begin{question}\label{ques:spiderweb}
Is the following statement true?  There exist $C_n, \varepsilon_n > 0$ such that if $M$ is a closed, oriented, connected $n$-dimensional Riemannian manifold, then $M$ has a finite-sheeted covering space $\widehat{M}$ such that the following holds.  Suppose that in the $(\frac{1}{2}\sys \widehat{M})$-separating filtration
\[\widehat{M} = Z_n \supseteq Z_{n-1} \supseteq \cdots \supseteq Z_1 \supseteq Z_0,\]
for every $i = 0, \ldots, n-1$, the set $Z_i$ is within $\varepsilon_n$ of the infimal $i$-dimensional area of $(\frac{1}{2}\sys \widehat{M})$-separating subsets of $Z_{i+1}$.  Then
\[\Length Z_1 \leq C_n\cdot\left(\Vol \widehat{M} + \Vert \widehat{M}\Vert_{\Delta}\right).\]
\end{question}

We quickly sketch a proof, using the method of~\cite{karam15}, that an affirmative answer to this question would imply Conjecture~\ref{conj:main}.  By a rescaling argument similar to~\cite[Corollary 3.9]{sabourau22}, it suffices to show that there are constants $a_n, R_n, \delta'_n > 0$ such that if $\frac{\Vol M}{\Vert M \Vert_{\Delta}} < \delta'_n$, then $V_r(\widetilde{M}) \geq a_n \cdot 2^r$ for all $r \geq R_n$.  To do this, in a cover $\widehat{M}$ with $\frac{1}{2}\sys\widehat{M} > r$ we find a subset $Z'_1$ of $Z_1$ that, when viewed as a graph, has minimum vertex degree at least $3$, has no null-homotopic cycles, and has at least $\frac{\Vert \widehat{M}\Vert_{\Delta}}{2^n}$ edges.  Thus, if $\frac{\Vol M}{\Vert M \Vert_{\Delta}} \leq \left(\frac{1}{12 \cdot 2^n C_n}\right)^n$, then the average edge length in $Z'_1$ is at most $\frac{1}{6}$.  Then \cite[Theorem C]{karam15} implies that there is a point $x_0$ in $Z'_1$ such that for all $r$ with $1 \leq r \leq \frac{1}{2}\sys \widehat{M}$, we have
\[\Length Z'_1 \cap B(x_0, r) \geq \frac{3}{4}\cdot 2^r.\]
Using the near-minimizing property of each $Z_i$ as in~\cite[Lemma 6]{alpert22}, this implies by induction that the $i$-dimensional area of $Z_i \cap B(x_0, r)$ is at least $\frac{3}{4}2^{r - (i-1)}- (i-1)\varepsilon_n$ if $i \leq r \leq \frac{1}{2}\sys \widehat{M}$, and in particular, that the volume of $B(x_0, r)$ is at least $\frac{3}{2^{n+1}}2^r - (n-1)\varepsilon_n$ if $n \leq r \leq \frac{1}{2}\sys \widehat{M}$.  Taking $\varepsilon_n \rightarrow 0$ gives the desired lower bound for $V_r(\widetilde{M})$.

\bibliography{mu-bib}
\bibliographystyle{amsalpha}
\end{document}